\newcommand{\C}{\mathbb{C}}
\newcommand{\Z}{\mathbb{Z}}
\newcommand{\QQ}{\mathbb{Q}}
\newcommand{\PP}{\mathbb{P}}
\newcommand{\A}{\mathbb{A}}
\newcommand{\DD}{\mathcal D}
\newcommand{\WW}{\mathcal W}
\newcommand{\XX}{\mathcal X}
\newcommand{\CC}{\mathcal C}
\newcommand{\MM}{\mathcal M}
\newcommand{\hh}{\mathfrak h}
\newcommand{\ttt}{\mathfrak t}
\newcommand{\ima}{\hbox{Im}}
\newcommand{\Gr}{\operatorname{Gr}}
\newcommand{\QQQ}{\mathcal Q}
\newcommand{\Hom}{\operatorname{Hom}}
\newcommand{\DM}{\operatorname{DM}}
\newcommand{\one}{\mathds{1}}
    \newcommand\dashdownarrowi{\mathchoice%
    {\rotatebox[origin=c]{-90}{$\displaystyle\dashrightarrow$}}%
    {\rotatebox[origin=c]{-90}{$\displaystyle\dashrightarrow$}}%
    {\rotatebox[origin=c]{-90}{$\scriptstyle\dashrightarrow$}}%
    {\rotatebox[origin=c]{-90}{$\scriptscriptstyle\dashrightarrow$}}%
} 
\newcommand{\dashdownarrow}{\mathrel{\dashdownarrowi}}
\newtheorem{theorem}{Theorem}[section]
\newtheorem{claim}[theorem]{Claim}
\newtheorem{lemma}[theorem]{Lemma}
\newtheorem{proposition}[theorem]{Proposition}
\newtheorem{convention}{Conventions}
\theoremstyle{definition}
\newtheorem{remark}[theorem]{Remark}
\newtheorem{definition}[theorem]{Definition}
\newtheorem{thm2}{Theorem}
\newtheorem{nonumberingt}{Acknowledgements}
\begin{document}
\author[Robert Laterveer]
{Robert Laterveer}

\address{Institut de Recherche Math\'ematique Avanc\'ee,
CNRS -- Universit\'e 
de Strasbourg,\
7 Rue Ren\'e Des\-car\-tes, 67084 Strasbourg CEDEX,
FRANCE.}
\email{robert.laterveer@math.unistra.fr}

\title{On the Chow ring of Fano varieties of type $S6$}

\begin{abstract} Fatighenti and Mongardi have defined Fano varieties of type S6 as zero loci of a certain vector bundle on the Grassmannian $\Gr(2,10)$.
These varieties have 3 Hodge structures of K3 type in their cohomology. We show that the Chow ring of these varieties also displays ``K3 type'' behaviour.
\end{abstract}

\keywords{Algebraic cycles, Chow groups, motives, Beauville's splitting property, multiplicative Chow--K\"unneth decomposition, Fano varieties of K3 type}

\subjclass{Primary 14C15, 14C25, 14C30.}

\maketitle

\section*{Introduction}

 For a smooth projective variety $X$ over $\C$, we write $A^i(X)=CH^i(X)_{\QQ}$ for the Chow group of codimension $i$ algebraic cycles modulo rational equivalence with $\QQ$-coefficients, and $A^i_{hom}(X)$ for the subgroup of homologically trivial cycles. Intersection product defines a ring structure on $A^\ast(X)=\oplus_i A^i(X)$. In the case of K3 surfaces, this ring structure has a curious property:

\begin{thm2}[Beauville--Voisin \cite{BV}]\label{K3} Let $S$ be a projective K3 surface. 
%Let $D_i, D_i^\prime\in A^1(S)$ be a finite number of divisors. Then
 % \[ \sum_i D_i\cdot D_i^\prime=0\ \ \ \hbox{in}\ A^2(S)_{}\ \Leftrightarrow\ \sum_i D_i\cdot D_i^\prime=0\ \ \ \hbox{in}\ H^4(S,\QQ)\ .\]
 Let $R^\ast(S)\subset A^\ast(S)$ be the $\QQ$-subalgebra generated by $A^1(S)$ and the Chow-theoretic Chern class $c_2(S)$. The cycle class map induces an injection
   \[ R^\ast(S)\ \hookrightarrow\ H^\ast(S,\QQ)\ .\]
  \end{thm2}
   
 This note is about the Chow ring of Fano varieties {\em of type S6\/} in the terminology of Fatighenti--Mongardi \cite{FM}. By definition, a Fano variety of type S6 is the smooth dimensionally transverse zero locus of a global section of the bundle $\QQQ^\ast(1)$ on the Grassmannian $\Gr(2,10)$ (here $\QQQ$ is the universal quotient bundle). Varieties of type S6 are 8-dimensional, and their Hodge diamond looks like
  \[ \begin{array}[c]{ccccccccc}
	&&&&1&&&&\\
	&&&  &1&  &&&\\
	&&&&2&&&&\\
	&&&1&22&1&&&\\	
	0& \dots&\dots&1&23&1&\dots&\dots&0\\
	&&&1&22&1&&&\\
	&&&  &2&  &&&\\
	&&&&1&&&&\\
	&&&&1&&&&\\
\end{array}\]
(where all empty entries are $0$).	   
 The remarkable thing is that these varieties have 3 Hodge structures of K3 type in their cohomology. As shown in \cite[Proposition 3.29]{FM} (cf. also \cite[Section 4]{BFM}), general varieties of type S6 are related to the hyperk\"ahler fourfolds of Debarre--Voisin \cite{DV}, which somehow explains these Hodge structures.

The goal of this note is to see how the peculiar shape of this Hodge diamond translates into peculiar properties of the Chow ring.
A first result is as follows:
 
 \begin{thm2}\label{main} Let $X$ be a Fano variety of type S6. Then
 \[ A^i_{hom}(X)=0\ \ \forall i\not\in \{ 4,5,6\}\ .\]
 Moreover, for $X$ general intersecting with an ample divisor $h$ induces isomorphisms
  \[ \begin{split} \cdot h\colon\ \ &A^4_{hom}(X)\ \xrightarrow{\cong}\ A^5_{hom}(X)\ ,\\
             \cdot h\colon\ \ &A^5_{hom}(X)\ \xrightarrow{\cong}\ A^6_{hom}(X)\ .\\
             \end{split}\]
       \end{thm2}
       
 This is in accordance with the Bloch--Beilinson conjectures \cite{J2}. (Indeed, the fact that $A^8_{hom}(X)=A^7_{hom}(X)=0$ corresponds to the fact that 
 $h^{p,q}(X)=0$ for $p\not=q, p+q\le 4$. The fact that $A^4_{hom}(X)\cong A^5_{hom}(X)\cong A^6_{hom}(X)$ corresponds to the fact that cupping with an ample divisor induces isomorphisms in transcendental cohomology $H^6_{tr}(X)\cong H^8_{tr}(X)\cong H^{10}_{tr}(X)$).  
 
 A second result concerns the ring structure of the Chow ring:
 
 \begin{thm2}\label{main2} Let $X$ be a Fano variety of type S6. Let $R^\ast(X)\subset A^\ast(X)$ denote the $\QQ$-subalgebra
   \[ R^\ast(X):=\langle A^1(X), A^2(X), c_j(X), \ima\bigl( A^j(\Gr(2,10))\to A^j(X)\bigr)\rangle\ \ \ \subset A^\ast(X)\ .\]
   The cycle class map induces an injection
   \[ R^\ast(X)\ \hookrightarrow\ H^\ast(X,\QQ)\ .\]
   \end{thm2}
    
 For $X$ general, we also prove that $A^2(X)\cdot A^3(X)$ injects into cohomology (Proposition \ref{23}).
 This is reminiscent of the behaviour of the Chow ring of a K3 surface (Theorem \ref{K3}). Theorem \ref{main2} suggests that $X$ might perhaps have a multiplicative Chow--K\"unneth decomposition in the sense of \cite{SV}, which would be a manifestation of the fact that ``$X$ is close to K3 surfaces''. Establishing this seems difficult however (cf. Remark \ref{difficult} below).      
    
\vskip0.6cm

\begin{convention} In this note, the word {\sl variety\/} will refer to a reduced irreducible scheme of finite type over the field of complex numbers $\C$. 
{\bf All Chow groups will be with $\QQ$-coefficients, unless indicated otherwise:} For a variety $X$, we will write $A_j(X):=CH_j(X)_{\QQ}$ for the Chow group of dimension $j$ cycles on $X$ with rational coefficients.
For $X$ smooth of dimension $n$, the notations $A_j(X)$ and $A^{n-j}(X)$ will be used interchangeably. 
The notation
$A^j_{hom}(X)$ will be used to indicate the subgroups of 
homologically trivial  cycles.
%For a morphism between smooth varieties $f\colon X\to Y$, we will write $\Gamma_f\in A^\ast(X\times Y)$ for the graph of $f$, and ${}^t \Gamma_f\in A^\ast(Y\times X)$ for the transpose correspondence.

We will write  $\MM_{{\rm rat}}$ for the contravariant category of Chow motives (i.e., pure motives as in \cite{Sc}, \cite{MNP}).
%and $\MM_{{\rm rat}}$ for the more usual category of Chow motives (i.e., with Hom--groups defined by $A^\ast(X\times Y)_{}$)
%We will write $H^j(X)=H^j(X,\QQ)$ for singular cohomology, and $H_j(X)=H_j^{BM}(X,\QQ)$ for Borel--Moore homology. 
\end{convention}

\section{Preliminaries}

\subsection{Transcendental part of the motive} We recall a classical result concerning the motive of a surface:

\begin{theorem}[Kahn--Murre--Pedrini \cite{KMP}]\label{t2} Let $S$ be a surface. There exists a decomposition
  \[ \hh^2(S)= \ttt(S)\oplus \hh^2_{alg}(S)\ \ \ \hbox{in}\ \MM_{\rm rat}\ ,\]
  such that
  \[  H^\ast(\ttt(S),\QQ)= H^2_{tr}(S)\ ,\ \ H^\ast(\hh^2_{alg}(S),\QQ)=NS(S)_{\QQ}\ \]
  (here $H^2_{tr}(S)$ is defined as the orthogonal complement of the N\'eron--Severi group $NS(S)_{\QQ}$ in $H^2(S,\QQ)$),
  and
   \[ A^\ast(\ttt(S))_{}=A^2_{AJ}(S)_{}\ .\]
   (The motive $\ttt(S)$ is called the {\em transcendental part of the motive\/}.)
   \end{theorem}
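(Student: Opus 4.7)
The plan is to refine Murre's Chow--Künneth decomposition of $S$ by splitting off an ``algebraic'' submotive of $\hh^2(S)$ that is cut out by an explicit projector built from N\'eron--Severi classes.

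First I would invoke Murre's theorem for surfaces: there exist mutually orthogonal idempotents $\pi_0,\dots,\pi_4\in A^2(S\times S)$ summing to $\Delta_S$, with $\pi_i$ acting as the identity on $H^i(S,\QQ)$ and as zero on the other $H^j$, and such that $\pi_i^\ast$ kills $A^2_{AJ}(S)$ for $i\neq 2$. Set $\hh^2(S):=(S,\pi_2,0)$. Fix divisors $D_1,\dots,D_\rho$ representing a basis of $NS(S)_\QQ$, and let $(m_{ij})$ be the inverse of the intersection matrix $(D_i\cdot D_j)$. Consider the candidate projector
\[ p := \sum_{i,j} m_{ij}\,(D_i\times D_j)\ \in\ A^2(S\times S). \]
A short computation using the composition formula $(D_j\times D_k)\circ(D_i\times D_\ell)=(D_j\cdot D_\ell)(D_i\times D_k)$, combined with the matrix identity $MAM=M$ (where $A=(D_i\cdot D_j)$ and $M=A^{-1}$), shows $p\circ p=p$. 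I would then set $\pi^{alg}_2:=\pi_2\circ p\circ\pi_2$ and $\pi^{tr}_2:=\pi_2-\pi^{alg}_2$, and (after verifying orthogonality) define $\ttt(S):=(S,\pi^{tr}_2,0)$ and $\hh^2_{alg}(S):=(S,\pi^{alg}_2,0)$.

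For the cohomological realization, $(D_i\times D_j)_\ast$ acts on $H^\ast(S,\QQ)$ by $\alpha\mapsto(D_i\cdot\alpha)[D_j]$, so $p$ is precisely the orthogonal projection $H^2(S,\QQ)\twoheadrightarrow NS(S)_\QQ$ with respect to the intersection form, and $\pi^{tr}_2$ realizes the projection onto $H^2_{tr}(S)$. For the Chow-theoretic statement $A^\ast(\ttt(S))=A^2_{AJ}(S)$, the key observation is that $\pi^{alg}_2$ factors through the $1$-dimensional scheme $\bigsqcup D_i$ (or its normalization), and for smooth curves the Albanese kernel vanishes; hence $\pi^{alg}_2$ acts as zero on $A^2_{AJ}(S)$. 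Combined with the vanishing of $\pi_i^\ast$ on $A^2_{AJ}(S)$ for $i\neq 2$, this yields $(\pi^{tr}_2)^\ast A^\ast(S)=A^2_{AJ}(S)$.

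The main obstacle is establishing all the required orthogonality relations: $\pi^{alg}_2\circ\pi^{alg}_2=\pi^{alg}_2$ and $\pi^{alg}_2\circ\pi_i=\pi_i\circ\pi^{alg}_2=0$ for $i\neq 2$. The first identity reduces to $p\circ\pi_2\circ p=p$, which in turn requires that the cohomology classes $[D_i]\in H^2(S,\QQ)$ lie in the image of $\pi_2$, i.e., are fixed by the weight-$2$ Künneth projector. This is where the precise shape of Murre's auxiliary projectors matters: one should construct $\pi_0,\pi_4$ using a point in general position, and $\pi_1,\pi_3$ from the Picard and Albanese varieties, so that $\pi_0^\ast D_i=\pi_4^\ast D_i=0$ and the weight-$2$ correspondence $\pi_2$ fixes each $D_i$. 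Once these compatibility choices are in place, the remaining verifications are formal manipulations in the category of Chow correspondences.
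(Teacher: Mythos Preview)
The paper does not give a proof of this statement; it is recalled from Kahn--Murre--Pedrini \cite{KMP} as a classical input, with no argument supplied. Your sketch is precisely the construction carried out in \cite{KMP}: refine Murre's Chow--K\"unneth decomposition by the explicit idempotent $p=\sum_{i,j} m_{ij}\,D_i\times D_j$ built from a basis of $NS(S)_\QQ$, and then check the required orthogonalities with Murre's $\pi_0,\pi_1,\pi_3,\pi_4$. So your approach is correct and coincides with the cited source; there is nothing in the paper itself to compare it against.

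One minor simplification: your argument that $\pi_2^{alg}$ kills $A^2_{AJ}(S)$ by ``factoring through curves'' is more than is needed. For any $a\in A^2(S)$ one has
\[ (D_i\times D_j)_\ast\, a \;=\; (p_2)_\ast\bigl((a\cdot D_i)\times D_j\bigr)\,,\]
and $a\cdot D_i\in A^3(S)=0$ for dimension reasons, so $p_\ast$ (hence $\pi_2^{alg}$) annihilates all of $A^2(S)$ trivially. The substantive verifications lie elsewhere: that $\pi_2^{tr}$ acts as the identity on $A^2_{AJ}(S)$ and as zero on $A^0(S)$, $A^1(S)$, and $A^2(S)/A^2_{AJ}(S)$. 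These do depend, as you correctly point out, on the specific shape of Murre's $\pi_0,\pi_1,\pi_3,\pi_4$ (point in general position, Picard/Albanese constructions), and once those compatibilities are in place the rest is indeed a formal check in $A^2(S\times S)$.
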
 
   
 The following provides a higher-dimensional version:  

\begin{proposition} Let $X$ be a smooth projective variety of dimension $n$, and assume $X$ is a complete intersection in a variety with trivial Chow groups. There exists a decomposition
  \[ \hh(X)= \ttt(X)\oplus  \bigoplus_{j} \mathds{1}(j)\ \ \ \hbox{in}\ \MM_{\rm rat}\ , \]
such that
    \[  H^\ast(\ttt(X),\QQ)= H^\ast_{tr}(X)\ \ \]
  (here the transcendental part $H^\ast_{tr}(X)$ is defined as the orthogonal complement of the algebraic part $N^{ \ast}(X):=\ima\bigl( A^{ \ast}(X)\to H^\ast(X,\QQ)\bigr)$),
  and
   \[ A^\ast(\ttt(X))_{}=A^\ast_{hom}(X)_{}   \ .\]
   (The motive $\ttt(X)$ will be called the {\em transcendental part of the motive\/}.)
\end{proposition}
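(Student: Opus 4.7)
The plan is to construct the idempotents cutting out the desired Lefschetz summands explicitly, as ``rank one'' product correspondences $\alpha_k \times \beta_k$ indexed by a basis of the algebraic cohomology $N^{\ast}(X)$, and let $\ttt(X)$ be the complementary summand.

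First, the Poincar\'e pairing on $N^{\ast}(X)$ is non-degenerate by the very definition of $H^{\ast}_{tr}(X)$ as its orthogonal complement (and in our setting, the complete intersection hypothesis together with Lefschetz hyperplane further forces $H^{\ast}_{tr}(X)$ to be concentrated in middle degree, though this stronger statement is not needed for the construction). I therefore choose cycles $\alpha_1, \dots, \alpha_r \in A^{\ast}(X)$ (with $\alpha_k \in A^{d_k}(X)$) lifting a basis of $N^{\ast}(X)$, and cycles $\beta_k \in A^{n-d_k}(X)$ forming a dual basis in the sense that $\deg(\alpha_k \cdot \beta_l) = \delta_{kl}$.

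Second, I set $\pi_k := \alpha_k \times \beta_k \in A^n(X \times X)$ and $\pi_{tr} := \Delta_X - \sum_k \pi_k$. The composition formula for product correspondences yields
\[ \pi_k \circ \pi_l \;=\; \deg(\beta_l \cdot \alpha_k)\cdot (\alpha_l \times \beta_k) \;=\; \delta_{kl}\,\pi_k \qquad \text{in } A^n(X\times X), \]
the crucial point being that the coefficient is the degree of a $0$-cycle, which coincides with its cohomology class. Thus the $\pi_k$ and $\pi_{tr}$ are pairwise orthogonal idempotents summing to $\Delta_X$, giving a decomposition
\[ \hh(X) \;\cong\; \ttt(X)\; \oplus\; \bigoplus_{k=1}^r (X,\pi_k,0) \qquad \text{in } \MM_{\rm rat}, \]
with $\ttt(X) := (X,\pi_{tr},0)$, and a routine check using the morphisms induced by $\alpha_k$ and $\beta_k$ identifies each $(X,\pi_k,0)$ with $\mathds{1}(-d_k)$.

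Finally, each $\pi_k$ acts as a rank-one projection on both $H^{\ast}(X,\QQ)$ and $A^{\ast}(X)$. On cohomology, $\sum_k (\pi_k)_*$ is the projection onto $N^{\ast}(X)$ along $H^{\ast}_{tr}(X)$, so $H^{\ast}(\ttt(X),\QQ) = H^{\ast}_{tr}(X)$. On Chow groups, the image of $\sum_k(\pi_k)_*$ is spanned by (the $\alpha_k$ or the $\beta_k$, depending on convention), which descend to a basis of $A^{\ast}(X)/A^{\ast}_{hom}(X) = N^{\ast}(X)$ and therefore give a linear complement of $A^{\ast}_{hom}(X)$ in $A^{\ast}(X)$; hence $A^{\ast}(\ttt(X)) = A^{\ast}_{hom}(X)$. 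The only really delicate point in the whole argument is arranging the dual basis with the required intersection pattern, which is pure linear algebra once non-degeneracy of the pairing on $N^{\ast}(X)$ is recorded; everything else is formal manipulation of product correspondences.
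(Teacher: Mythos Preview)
Your approach is the same as the paper's: the paper simply cites Vial \cite[Theorem 1]{Vi} for the construction of projectors $\pi^j_{alg}$ onto the algebraic part and defines $\ttt(X)$ via the complementary idempotent, which is exactly your $\pi_{tr}=\Delta_X-\sum_k\alpha_k\times\beta_k$ made explicit. The formal manipulations with product correspondences are fine.

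There is, however, one genuine gap. You assert that the Poincar\'e pairing on $N^\ast(X)$ is non-degenerate ``by the very definition of $H^\ast_{tr}(X)$ as its orthogonal complement''. This is not a valid justification: for an arbitrary smooth projective variety one could a priori have $N^\ast(X)\cap N^\ast(X)^\perp\neq 0$, i.e.\ a homologically non-trivial but numerically trivial cycle. That this does not happen is precisely the standard conjecture $D(X)$, and it is \emph{here} (not merely in the parenthetical remark about middle degree) that the complete-intersection hypothesis enters. Concretely: since the ambient variety has trivial Chow groups, Lefschetz hyperplane gives $H^j(X)=N^{j/2}(X)$ for $j\neq n$, so every non-primitive piece $L^kP^{n-2k}(X)$ ($k\ge 1$) in the Lefschetz decomposition of $H^n(X)$ consists of algebraic classes; hence the primitive component $\alpha_0$ of any $\alpha\in N^{n/2}(X)$ is again algebraic, and then Hodge--Riemann (definiteness on each $L^kP^{p-k,p-k}$) forces any $\alpha$ in the radical of the pairing to vanish. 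Equivalently, one checks that $X$ satisfies the standard conjectures and invokes Vial, which is what the paper does. Either way, the hypothesis you flagged as ``not needed for the construction'' is exactly what makes the dual basis $\{\beta_k\}$ exist.
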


\begin{proof} This is a standard construction (cf. \cite[Section 2]{BP}, where this decomposition is constructed for cubic fourfolds). One can apply \cite[Theorem 1]{Vi}, where projectors $\pi^j_{alg}$ on the algebraic part of cohomology are constructed for any smooth projective variety satisfying the standard conjectures. The motive $\ttt(X)$ is then defined by the projector $\Delta_X-\sum_j \pi^j_{alg}$.
\end{proof}

\subsection{Voevodsky motives}

 \begin{definition} Let $\DM$ be the triangulated category
    \[ \DM:=\DM_{\rm Nis}^{\rm eff,-}(\C,\Z) \]
    as defined in \cite[Definition 14.1]{MVW}.
    
    For any (not necessarily smooth) variety $X$, let $M^c(X):= z_{equi}(X,0)\in\DM$ denote the {\em motive with compact support\/} as in 
    \cite[Definition 16.13]{MVW}. Here $z_{equi}(X,0)$ denotes the sheaf of equidimensional cycles of relative dimension $0$ \cite[Definition 16.1]{MVW}, considered as object of $\DM$. 
    
  Given $M\in\DM$ and $n\in\Z$, there are objects $M[n]\in \DM$ (this corresponds to shifting the degrees of the complex defining $M$), and $M(n)\in\DM$ (this corresponds to the ``Tate twist'' in $\MM_{\rm rat}$.
  \end{definition}
  
  \begin{remark}\label{remark} There is a contravariant fully faithful functor
    \[ \MM_{\rm rat}\ \to\ \DM\ ,\]
    sending the motive $h(X)=(X,\Delta_X,0)$ to the motive $M^c(X)$ and sending $\one(n)$ to $\one(n)[2n]$ \cite[Chapter 20]{MVW}.

  If $X$ is smooth projective, there is an isomorphism $M(X)\cong M^c(X)$ \cite[Example 16.2]{MVW}, where the motive $M(X)$ is as in \cite[Definition 14.1]{MVW}.
  
  For any closed immersion $Y\subset X$ with complement $U:=X\setminus Y$, there is a ``Gysin'' distinguished triangle in $\DM$
    \[ M^c(Y)\ \to\ M^c(X)\ \to\ M^c(U)\ \xrightarrow{[1]}\  \]
    \cite[Theorem 16.15]{MVW}.
    
    The functor $M^c(-)$ is contravariant with respect to \'etale morphisms, and covariant for proper morphisms. 
    Given $X$ smooth and $h\in A^1(X)$ ample, one can define functorial maps $\cdot h^j\colon M^c(X)\to M^c(X)(j)[2j]$, that correspond to intersecting with $h^j$.
    %Moreover, for a flat morphism $p\colon W\to V$ of relative dimension $n$, there are functorial Gysin maps 
     % \[  M^c(V)(n)[2n]\ \to\ M^c(W)\ \ \ \hbox{in}\ \DM\ .\]
   \end{remark}

\begin{proposition}\label{projbun} Let $X$ be a smooth quasi-projective variety, and let $p\colon P\to X$ be a $\PP^r$-bundle. Let $h\in A^1(P)$ be ample. There is an isomorphism in $\DM$
  \[  \alpha:=  \sum_{j=0}^r  p_\ast \cdot  h^j\colon\ \  M^c(P)\ \xrightarrow{\cong}\ \bigoplus_{j=0}^r M^c(X)(j)[2j]\ .\]
\end{proposition}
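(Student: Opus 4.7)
The plan is to establish this projective bundle formula in $\DM$ by induction on $r$, following the standard argument for the projective bundle formula in motivic categories (compare \cite{MVW}): the inductive step relies on the Gysin distinguished triangle from Remark~\ref{remark} together with the Thom isomorphism $M^c(V)\cong M^c(X)(s)[2s]$ for a rank-$s$ vector bundle $V\to X$ (a formal consequence of the $\mathbb{A}^1$-invariance of $M^c$).

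The base case $r=0$ is tautological. For the inductive step, write $P=\PP(\EE)$ for a vector bundle $\EE$ of rank $r+1$, and choose (Zariski-locally on $X$, then patching via a Mayer--Vietoris triangle in $\DM$) a rank-$r$ quotient $\EE\twoheadrightarrow \EE''$. The resulting closed immersion of projective bundles $\PP(\EE'')\hookrightarrow P$ has complement $U\subset P$ the total space of an $\mathbb{A}^r$-bundle over $X$, and the Gysin triangle reads
\[ M^c(\PP(\EE''))\ \to\ M^c(P)\ \to\ M^c(U)\ \xrightarrow{[1]}\ . \]
By the inductive hypothesis $M^c(\PP(\EE''))\cong \bigoplus_{j=0}^{r-1}M^c(X)(j)[2j]$, and by the Thom isomorphism $M^c(U)\cong M^c(X)(r)[2r]$. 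The constituents of the desired decomposition are thus in place; it remains to verify that the particular map $\alpha$ realizes the splitting.

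This last point follows from the observation that, since $h$ is ample, its restriction to each fiber $\PP^r$ of $p$ is a positive multiple of the hyperplane class, so that the matrix of fiber-restricted operators $(p_\ast\cdot h^j)|_{\mathrm{fiber}}$, $j=0,\ldots,r$, is upper triangular with nonzero diagonal entries in the fiber-cohomology basis $\{1,H,\ldots,H^r\}$. Combined with the functoriality of $p_\ast$ and $\cdot h^j$ in $\DM$ (Remark~\ref{remark}), this identifies $\alpha$, compatibly with the Gysin triangle, with the direct sum of the inductive isomorphism on $\PP(\EE'')$ and the Thom isomorphism on $U$; the five lemma in $\DM$ then concludes.

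The main obstacle is verifying carefully that $\alpha$ is compatible with the Gysin triangle at each inductive step, in particular that its top component $p_\ast\cdot h^r$ corresponds under the connecting morphism to the Thom class of $U$, and handling the Zariski patching cleanly (since a rank-$r$ quotient of $\EE$ need not exist globally). Both are standard computations in the motivic setting but require some care with the signs and the functoriality of the Gysin maps.
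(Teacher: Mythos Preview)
Your argument is correct in outline, and the honest acknowledgment of the compatibility verification is appropriate. However, your route differs from the paper's. The paper does not induct on $r$; instead it first invokes the known projective bundle formula for $M(-)$ in \cite[Theorem 15.12]{MVW} and observes that the statement for $M^c$ is its dual. It then sketches a direct proof: since $\alpha$ is globally defined, Gysin triangles (Mayer--Vietoris) reduce to the case of a trivial bundle $P=X\times\PP^r$; the K\"unneth isomorphism $M^c(X\times\PP^r)\cong M^c(X)\otimes M^c(\PP^r)$ then reduces to $X$ a point, where one computes $M^c(\PP^r)$ from $M^c(\A^i)=\one(i)[2i]$. Your approach trades the K\"unneth step for an induction on $r$ via a hyperplane sub-$\PP^{r-1}$-bundle and the Thom isomorphism for the complementary affine bundle. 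Both are standard; the paper's route is shorter because it localizes once (to trivialize the whole bundle) and then defers to the computation over a point, whereas you localize at every inductive step and must check compatibility of $\alpha$ with the Gysin triangle each time. One small quibble: the Thom isomorphism $M^c(V)\cong M^c(X)(s)[2s]$ is not a purely formal consequence of $\A^1$-invariance of $M^c$; it also uses Zariski-local triviality of $V$ together with Mayer--Vietoris, the same patching you already invoke elsewhere.
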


\begin{proof} This is dual to the projective bundle formula for $M(X)$ given in \cite[Theorem 15.12]{MVW}.
Using the Gysin distinguished triangle \cite[Exercice 16.18]{MVW}, one reduces to the case $P=X\times\PP^r$. Using the isomorphism $M^c(X\times\PP^r) \cong M^c(X)\otimes M^c(\PP^r)$ \cite[Corollary 16.16]{MVW}, one reduces to the case where $X$ is a point, which follows from $M^c(\A^i)=\one(i)[2i]$.
\end{proof}

\begin{lemma}\label{zero}
 Let $X,Y$ be smooth projective varieties, let $j\in\Z$ and $k\in\Z_{\ge 0}$. Then
  \[  \Hom_{_{\DM}}\bigl(M(X)(k)[2k], M(Y){ (j)[2j+1]}\bigr)=0\ .\]
  \end{lemma}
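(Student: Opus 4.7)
The plan is to identify this Hom--group with a motivic cohomology group of the product $X \times Y$, and then to observe that the odd shift $[2j+1]$ produces a degree--weight mismatch that forces vanishing. I would proceed in three short steps.

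First, since $X$ is smooth projective of dimension $d_X$, the motive $M(X)$ is strongly dualizable in $\DM$ with dual $M(X)(-d_X)[-2d_X]$ (Poincar\'e duality in $\DM$, cf.\ \cite[Chapter 16]{MVW}); the same holds for $Y$. Combining this with the K\"unneth isomorphism $M(X) \otimes M(Y) \cong M(X \times Y)$ and the tensor--hom adjunction, the $\Hom$-group in the lemma is canonically identified with
\[
\Hom_{\DM}\bigl(M(X \times Y),\; \Z(q)[2q+1]\bigr) \;=\; H^{2q+1,\,q}(X \times Y,\; \Z),
\]
where $q := j-k+d_Y$. In other words, it is a motivic cohomology group of the smooth variety $X \times Y$.

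Second, I would invoke Voevodsky's comparison theorem \cite[Theorem 19.1]{MVW}: for a smooth variety $Z$,
\[
H^{p,q}(Z,\; \Z) \;\cong\; CH^q(Z,\; 2q-p)
\]
is Bloch's higher Chow group. With $p = 2q+1$ this produces $CH^q(X \times Y,\; -1)$, which vanishes by construction, since Bloch's cycle complex $z^q(-, m)$ is zero in negative simplicial degree $m<0$. In the auxiliary case $q<0$, the motivic cohomology group vanishes even more trivially, because $\Z(q) = 0$ in the effective triangulated category.

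I do not foresee any real obstacle: once one has Poincar\'e duality in $\DM$ for smooth projective varieties and the identification of motivic cohomology with higher Chow groups, the argument is essentially bookkeeping. The one mild subtlety to handle carefully is the case where $j-k$ is negative, so that intermediate negative Tate twists appear, but the strong dualizability of $M(X \times Y)$ ensures the tensor--hom manipulations make sense uniformly for any integer Tate twist.
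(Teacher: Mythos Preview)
Your argument is correct and follows essentially the same route as the paper: rewrite the Hom via duality for $M(Y)$ as motivic cohomology $H^{2q+1,q}(X\times Y,\Z)$ with $q=j-k+d_Y$, then identify this with the higher Chow group $CH^q(X\times Y,-1)=0$ using \cite[Theorem 19.1]{MVW}. The paper phrases the first step as an application of the duality theorem \cite[Theorem 16.24]{MVW} followed by cancellation \cite[Theorem 16.25]{MVW}, whereas you invoke strong dualizability and tensor--hom adjunction; these amount to the same manipulation. One small caveat: since the paper's $\DM$ is the \emph{effective} category $\DM_{\rm Nis}^{\rm eff,-}$, the object $M(X)(-d_X)[-2d_X]$ does not literally live there, so ``strongly dualizable in $\DM$'' is not quite accurate as written---either work in the stabilized category or, as the paper does, appeal directly to \cite[Theorems 16.24, 16.25]{MVW}, which are formulated so as to avoid negative twists.
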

  
\begin{proof}
 For any smooth projective varieties $X, Y$ with $d:=\dim Y$, and any $k\ge 0$ one has
     \[    \begin{split} \Hom_{_{\DM}}\bigl(M(X)(k)[2k], M(Y){ (j)[2j+1]}\bigr)&= \Hom_{_{\DM}}\bigl(M(X\times Y)(k-d)[2k-2d], \one(j)[2j+1]\bigr)  \\
                          &=\Hom_{_{\DM}}\bigl( M(X\times Y), \one(j-k+d)[2(j-k+d)+1]\bigr)\\
                                &= H^{2(j-k+d)+1\, ,\, j-k+d}(X,\Z)\\
                                &= A^{j-k+d}(X,-1)=0\ .\\
                                \end{split}     \]
                                Here, the first equality follows from the duality theorem \cite[Theorem 16.24]{MVW}, the second equality is cancellation \cite[Theorem 16.25]{MVW}, the third equality is by definition of motivic cohomology $H^{\ast,\ast}(-,\Z)$ for the smooth variety $X$ \cite[14.5]{MVW}, and the last equality follows from the relation between motivic cohomology and higher Chow groups $A^\ast(-,\ast)$ \cite[Theorem 19.1]{MVW}.

\end{proof}

\section{First result}

This section contains the proof of Theorem \ref{main} stated in the introduction.

%\begin{theorem}\label{main}
%Let $X$ be a Fano variety of type S6. Then
% \[ A^i_{hom}(X)=0\ \ \forall i\not\in \{ 4,5,6\}\ .\]
% Moreover, intersecting with an ample divisor $h$ induces isomorphisms
%  \[ \begin{split} \cdot h\colon\ \ &A^4_{hom}(X)\ \xrightarrow{\cong}\ A^5_{hom}(X)\ ,\\
%             \cdot h\colon\ \ &A^5_{hom}(X)\ \xrightarrow{\cong}\ A^6_{hom}(X)\ .\\
%             \end{split}\]
%\end{theorem}

\begin{proof}(of Theorem \ref{main}) The argument is based on a nice geometric relation between $X$ and the $20$-dimensional Debarre--Voisin hypersurface $X_{DV}\subset\Gr(3,10)$ \cite{DV}. This geometric relation is described in \cite[Section 3.10]{FM} (cf. also \cite[Section 4]{BFM}, where $X$ is called $T=T(2,10)$). A Debarre--Voisin hypersurface is by definition a smooth hyperplane section $X_{DV}\subset\Gr(3,10)$ (with respect to the Pl\"ucker embedding). Starting from an $X_{DV}$, we construct the diagram (\ref{diag}) below.
Here $\operatorname{Fl}(2,3,10)$ denotes the flag variety, and the morphism $p_{Fl}$ is a $\PP^2$-bundle (the fibres correspond to a choice of $2$-dimensional subvector space in a fixed $3$-dimensional vector space). The morphism $p$ induced by restricting $p_{Fl}$ to $Z:=(p_{Fl})^{-1}(X_{DV})$ is again a $\PP^2$-bundle.
The projection $\phi_{Fl}$ from $\operatorname{Fl}(2,3,10)$ to $\Gr(2,10)$ is a $\PP^6$-bundle. The restriction $\phi:=\phi_{Fl}\vert_Z$ is therefore generically a $\PP^6$-bundle. The locus over which $\phi$ has $7$-dimensional fibres is the zero locus $X$ of a section of the dual of $\QQQ(-1)$. Hence, if this locus $X$ is smooth and dimensionally transverse, it is a variety of type S6.

\begin{equation}\label{diag} 
\begin{array}[c]{ccccccccc}
     &&Z_X&\xrightarrow{\iota}& Z &\hookrightarrow{}& \operatorname{Fl}(2,3,10)&&\\
     &&&&&&&&\\
     &\swarrow{\scriptstyle \phi_X}&& \swarrow{\scriptstyle \phi} &&\ \ \ \searrow{\scriptstyle p}&&\ \ \ \searrow {\scriptstyle p_{Fl}} &\\
     &&&&&&&&\\
     X&\hookrightarrow&\Gr(2,10)&&&&X_{DV}&\hookrightarrow&\Gr(3,10)\\
     \end{array}
     \end{equation}

As explained in \cite{FM}, there is an isomorphism
  \[ H^0(\Gr(2,10),\QQQ(1))\cong \wedge^3 V_{10}^\vee\ ,\]
 and so the space parametrizing Fano varieties of type S6 is of the same dimension as the space parametrizing Debarre--Voisin hypersurfaces. It follows 
 that a general Fano variety $X$ of type S6 can be obtained from a diagram (\ref{diag}). 
 %(As a side-remark, this also shows that the moduli dimension of Fano varieties of type S6 is the same as the moduli dimension of Debarre--Voisin hypersurfaces, which (as follows from the generic Torelli theorem of \cite{Sai}) is $20$).

 We proceed to relate $X$ and $X_{DV}$ on the level of motives:

\begin{theorem}\label{main0} Let $X$ be a Fano variety of type S6, and assume $X$ is related to a Debarre--Voisin hypersurface $X_{DV}$ as in diagram (\ref{diag}). Then there is an isomorphism of Chow motives
  \[ h(X)\cong \bigoplus_{j= -7}^{-5} \ttt(X_{DV})(j)\oplus \bigoplus \mathds{1}(\ast)\ \ \ \hbox{in}\ \MM_{\rm rat}\ .\]
 % (Here, the transcendental motive $t(X_{DV})$ is such that $h(X_{DV})=t(X_{DV})\oplus \bigoplus \mathds{1}(\ast)$.)
% In particular, there are isomorphisms of Chow groups
%   \[ A^j_{hom}(X)\cong \begin{cases}  A^{11}_{hom}(X_{DV}) & \hbox{if}\ j=4,5,6\ ,\\
%                                                    0 &\hbox{otherwise}\ .\\
%                                                    \end{cases}\]
 \end{theorem}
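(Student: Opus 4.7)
The strategy is to use the variety $Z$ from diagram~(\ref{diag}) as a bridge between $X$ and $X_{DV}$, exploiting the projective bundle structures and Gysin triangles available in $\DM$, and to transport the resulting isomorphism back to $\MM_{\rm rat}$ via the fully faithful embedding of Remark~\ref{remark}. Informally, a Grothendieck-group computation already predicts the shape of the decomposition, and Lemma~\ref{zero} shows this predicted identity lifts to an actual direct sum in $\DM$.

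First I would apply Proposition~\ref{projbun} to the three projective bundles visible in the diagram:
\begin{align*}
M^c(Z) &\cong \bigoplus_{j=0}^{2} M^c(X_{DV})(j)[2j] \quad \text{(from $p\colon Z \to X_{DV}$)},\\
M^c(Z_X) &\cong \bigoplus_{j=0}^{7} M^c(X)(j)[2j] \quad \text{(from $\phi_X\colon Z_X \to X$)},\\
M^c(Z_U) &\cong \bigoplus_{j=0}^{6} M^c(U)(j)[2j] \quad \text{(for $\phi|_{Z_U}$, with $U := \Gr(2,10)\setminus X$)}.
\end{align*}
Feeding these into the two Gysin distinguished triangles
\[ M^c(Z_X)\to M^c(Z)\to M^c(Z_U)\xrightarrow{+1}\quad\text{and}\quad M^c(X)\to M^c(\Gr(2,10))\to M^c(U)\xrightarrow{+1} \]
yields, in the Grothendieck group $K_0(\DM)$ (writing $L := [\one(1)[2]]$), the identity
\[ [M(X)](1+L+\cdots+L^7) + ([M(\Gr(2,10))]-[M(X)])(1+L+\cdots+L^6) = [M^c(X_{DV})](1+L+L^2). \]

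Since $\Gr(2,10)$ and $\Gr(3,10)$ have trivial Chow groups, their motives in $\DM$ are sums of Tate motives; in particular, the proposition following Theorem~\ref{t2}, applied to the hyperplane section $X_{DV}\subset\Gr(3,10)$, gives $M^c(X_{DV}) \cong \ttt(X_{DV}) \oplus \bigoplus\one(n_i)[2n_i]$. Simplifying the LHS of the identity to $L^7[M(X)] + [M(\Gr(2,10))](1+L+\cdots+L^6)$ and grouping Tate terms, one obtains $L^7[M(X)] = [\ttt(X_{DV})](1+L+L^2) + \text{(Tate classes)}$, whence
\[ [M(X)] = [\ttt(X_{DV})]\bigl(L^{-5}+L^{-6}+L^{-7}\bigr) + \text{(Tate classes)}. \]
The indices $-7,-6,-5$ are thus forced by the three summands of the $\PP^2$-bundle $Z\to X_{DV}$, shifted by $L^{-7}$ coming from the $\PP^7$-bundle $Z_X\to X$, and match exactly the range in the statement.

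To upgrade this Grothendieck-group identity to a genuine isomorphism in $\DM$, one must show each relevant distinguished triangle splits as a direct sum. By Lemma~\ref{zero}, all $\Hom$ groups of the form $\Hom(M(V)(k)[2k], M(W)(j)[2j+1])$ with $V,W$ smooth projective vanish. The principal obstacle is that $M^c(U)$ is not itself a smooth projective motive, so Lemma~\ref{zero} does not apply directly to connecting maps involving $M^c(U)$; one first resolves $M^c(U)$ via the Gysin triangle $M(X)\to M(\Gr(2,10))\to M^c(U)\xrightarrow{+1}$, reducing to connecting maps between sums of smooth projective motive shifts, which then vanish. Once the splittings hold, cancelling the common Tate summands yields the isomorphism $M(X) \cong \bigoplus_{j=-7}^{-5} \ttt(X_{DV})(j)[2j] \oplus \bigoplus\one(n_i)[2n_i]$ in $\DM$, which via Remark~\ref{remark} is precisely the claimed decomposition of $h(X)$ in $\MM_{\rm rat}$.
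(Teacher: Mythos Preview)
Your overall strategy—using $Z$ as a bridge, the three projective-bundle formulas, the two Gysin triangles, Lemma~\ref{zero}, and the embedding $\MM_{\rm rat}\hookrightarrow\DM$—is exactly the paper's, and your $K_0$ computation correctly predicts the twists $-7,-6,-5$. The gap is in the ``upgrade'' step. You assert that ``each relevant distinguished triangle splits'', but the Gysin triangle $M^c(Z_X)\to M^c(Z)\to M^c(Z_U)\xrightarrow{[1]}$ does \emph{not} split: its connecting map has source $\bigoplus_j M^c(U)(j)[2j]$, and when you resolve $M^c(U)$ via $M(X)\to M(G)\to M^c(U)\xrightarrow{[1]}$, the resulting long exact sequence for $\Hom(-,M(X)(k)[2k+1])$ receives a contribution from $\Hom\bigl(M(X)(j)[2j+1],M(X)(k)[2k+1]\bigr)\cong A^\ast(X\times X)$, which is not killed by Lemma~\ref{zero}. (Equivalently: if $M(X)\to M(G)\to M^c(U)$ split, $h(X)$ would be a sum of Tate motives, contradicting $\ttt(X)\neq 0$.) So ``resolving $M^c(U)$'' does not reduce you to maps that vanish.

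The paper does not try to split the individual triangles. Instead it arranges the rotated Gysin triangle for $Z_X\subset Z$, the summed rotated Gysin triangles for $X\subset G$, and the split triangle coming from $M^c(Z_X)\cong\bigoplus_{j=0}^7 M(X)(j)[2j]$ into a commutative diagram (Claim~\ref{claim}) and applies the \emph{octahedral axiom}. The output is a single new triangle
\[ M^c(Z)\ \to\ \bigoplus_{j=0}^6 M^c(G)(j)[2j]\ \to\ M^c(X)(7)[15]\ \xrightarrow{[1]} \]
whose middle arrow now runs between smooth projective motives only; Lemma~\ref{zero} then forces it to vanish, giving $h(Z)\cong h(X)(7)\oplus\bigoplus_{j=0}^6 h(G)(j)$ directly in $\MM_{\rm rat}$. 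Combined with $h(Z)\cong\bigoplus_{i=0}^2 h(X_{DV})(i)$ and passage to transcendental parts (which is the paper's version of your ``cancelling Tate summands''), this yields the theorem. Your phrase ``reducing to connecting maps between sums of smooth projective motive shifts'' is morally the octahedral step, but it has to be executed as above; a blanket splitting claim does not go through.
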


\begin{proof} 
%The argument is similar to that of \cite[Theorem 3.1]{36}. 
Let us write $G:=\Gr(2,10)$ and $U:=G\setminus X$ and $Z_U:=Z\setminus Z_X$. Also, let us write $\phi_U\colon Z_U\to U$ for the restriction of $\phi$ to $Z_U$.
The inclusion $Z_X\hookrightarrow Z$ gives rise to a distinguished triangle
 \[  M^c(Z_X)\to M^c(Z) \to M^c(Z_U) \xrightarrow{[1]} \]
  in $\DM$. Since $Z_U\to U$ is a $\PP^6$-bundle, there is an isomorphism $M^c(Z_U)\cong\oplus_{j=0}^6 M^c(U)(j)[2j]$ (Proposition \ref{projbun}). It follows there is also a distinguished triangle
  \[   M^c(Z_X)\ \to\ M^c(Z)\  \to\ \bigoplus_{j=0}^6 M^c(U)(j)[2j] \ \xrightarrow{[1]} \ ,\]
 and after rotating one obtains a distinguished triangle in $\DM$  
  \begin{equation}\label{first}   \bigoplus_{j=0}^6 M^c(U)(j)[2j-1]\to M^c(Z_X)\to M^c(Z)  \xrightarrow{[1]} \ .\end{equation}

  We claim that the triangle (\ref{first}) fits into a commutative diagram:
  
  \begin{claim}\label{claim} There is a commutative diagram in $\DM$
      \begin{equation}\label{dicl}  \begin{array}[c]{cccccc}
{\displaystyle\bigoplus_{j=0}^6}  M^c(U)(j)[2j-1] & \to &     M^c(Z_X)    & \to &    M^c(Z)    &  \xrightarrow{[1]} \\
               &&&&&\\
                                              & \searrow &  \downarrow      &     &&\\
                                              &&&&&\\
                                              &&  {\displaystyle\bigoplus_{j=0}^6}  M^c(X)(j)[2j] & &\dashdownarrow& \\
                                              &&&&&\\
                                              &&&&&\\
                                              &&\downarrow&\searrow\ \ \ &&\\
                                              &&&&&\\
                                              &&&&&\\
                                                          &&   M^c(X)(7)[15] &\dashleftarrow&  {\displaystyle\bigoplus_{j=0}^6 } M^c(G)(j)[2j]      &\\
                                              &&&&&\\
                                              &&\ \ \downarrow{\scriptstyle [1]}&& &\! \! \! \! \! \! \! \! \! \! \searrow^{\scriptstyle [1]}\ \ \ \ \ \ \  \ \ \ \ \ \ \ \ \ \\
                                              \end{array}\end{equation}   
    where the three lines with solid arrows are distinguished triangles.
    \end{claim}
    
 Granting this claim, one readily proves Theorem \ref{main0}: applying the octahedral axiom to the diagram (\ref{dicl}), one obtains a distinguished triangle
 following the dotted arrows:
   \[ M^c(Z)\to  \bigoplus_{j=0}^6  M^c(G)(j)[2j] \to   M^c(X)(7)[15]   \xrightarrow{[1]}\ .\]
   Applying Lemma \ref{zero}, the second arrow in this triangle must be zero, and so there is a direct sum decomposition
   \[   M^c(Z)\cong   M^c(X)(7)[14]    \oplus\bigoplus_{j=0}^6  M^c(G)(j)[2j] \ \ \ \ \hbox{in}\ \DM\ .\]
  Since all varieties in this isomorphism are smooth projective, and $\MM_{\rm rat}\to \DM$ is a full embedding, this means there is also a 
   direct sum decomposition
   \begin{equation}\label{iso7}  h(Z)\cong   h(X)(7)  \oplus\bigoplus_{j=0}^6  h(G)(j)\ \ \ \ \hbox{in}\ \MM_{\rm rat}\ .\end{equation}
   (Alternatively, the isomorphism (\ref{iso7}) can also be obtained directly by applying \cite[Corollary 3.2]{Ji}, which does not use Voevodsky motives.)
   
   The variety $Z$ is a $\PP^2$-bundle over $X_{DV}$, and so one gets an isomorphism of motives
    \[  \bigoplus_{i=0}^2 h(X_{DV})(i)\cong   h(X)(7)  \oplus\bigoplus_{j=0}^6  h(G)(j)\ \ \hbox{in}\ \MM_{\rm rat}\ .\]   
    Taking the transcendental parts on both sides (and remembering that the motive of the Grassmannian $G$ is a sum of twisted Lefschetz motives), one gets
    an isomorphism
    \[  \ttt(X)\cong  \bigoplus_{i=0}^2 \ttt(X_{DV})(-7+i) \ \ \hbox{in}\ \MM_{\rm rat}\ ,\]  
    which implies Theorem \ref{main0}.
    
    It remains to prove Claim \ref{claim}. The horizontal line of (\ref{dicl}) is the distinguished triangle (\ref{first}). The diagonal line is obtained from the ``Gysin'' distinguished triangles
     \[  M^c(X)(j)[2j]\ \to\  M^c(G)(j)[2j]\ \to\ M^c(U)(j)[2j]  \ \xrightarrow{[1]} \]
     by rotation and summing. The vertical line is a distinguished triangle because $Z_X\to X$ is a $\PP^7$-bundle (Proposition \ref{projbun}). To check commutativity, one observes there is a commutative diagram
   \begin{equation}\label{comdia}  \begin{array}[c]{cccccc}
      M^c(Z_X) &\xrightarrow{}& M^c(Z) &\to& M^c(Z_U)  &\xrightarrow{[1]}    \\
      &&&&&\\
     \ \ \ \  \downarrow{\scriptstyle\alpha_X}  &&  \ \ \ \  \downarrow{\scriptstyle\alpha}   &&    \ \ \ \ \ \ \cong\ \  \downarrow{\scriptstyle\alpha_U}&\\
        &&&&&\\
        \bigoplus_{j=0}^{6}M^c(X)(j)[2j] &\xrightarrow{}& \bigoplus_{j=0}^{6} M^c(G)(j)[2j]&\to& \bigoplus_{j=0}^6 M(U)(j)[2j]&\xrightarrow{[1]}\\
        \end{array}      \end{equation}
        Here the map $\alpha$ is defined as a sum $\sum_{j=0}^6 \phi_\ast \cdot h^j$ where $h$ is an ample class, and the maps $\alpha_X, \alpha_U$ are defined similarly by restricting $h$ to $X$ resp. to $U$. The map $\alpha_U$ is an isomorphism because $Z_U\to U$ is a $\PP^6$-bundle.
        (The left square commutes by functoriality of proper push-forward. The commutativity of the right square follows from commutativity on the level of $z_{equi}(-,0)$ , which can be checked directly.) After rotation, diagram (\ref{comdia}) gives a commutative diagram
        \begin{equation}\label{comdia2}  \begin{array}[c]{cccccc}
      M^c(Z_U)[-1] &\xrightarrow{}& M^c(Z_X) &\to& M^c(Z)  &\xrightarrow{[1]}    \\
      &&&&&\\
     \ \ \ \ \ \  \cong\ \ \downarrow{\scriptstyle\alpha_U}  &&  \ \ \ \  \downarrow{\scriptstyle\alpha_X}   &&    \ \ \ \  \downarrow{\scriptstyle\alpha}&\\
        &&&&&\\
        \bigoplus_{j=0}^{6}M^c(U)(j)[2j-1] &\xrightarrow{}& \bigoplus_{j=0}^{6} M^c(X)(j)[2j]&\to& \bigoplus_{j=0}^6 M^c(G)(j)[2j]&\xrightarrow{[1]}\\
        \end{array}      \end{equation}
       In the diagram (\ref{dicl}), the horizontal line is the top horizontal line of (\ref{comdia2}) combined with the isomorphism $\alpha_U$, while the diagonal line
       in (\ref{dicl}) is the bottom horizontal line of (\ref{comdia2}). This proves the commutativity of Claim \ref{claim}.

         \end{proof}

We now pursue the proof of Theorem \ref{main}. Let $\XX\to B$ denote the universal family of Fano varieties of type S6, where $B$ is a Zariski open in
$\PP  H^0(\Gr(2,10),\QQQ(1))$.
As we have seen, a general Fano variety of type S6 fits into a diagram (\ref{diag}), and so there is a Zariski open $B_0\subset B$ to which Theorem \ref{main0} applies. Taking Chow groups of the motives in Theorem \ref{main0}, we get an isomorphism
  \begin{equation}\label{isochow} A^i_{hom}(X_b)\cong A^{i+7}_{hom}(X_{DV})\oplus A^{i+6}_{hom}(X_{DV})\oplus A^{i+5}_{hom}(X_{DV})\ \ \ \forall b\in B_0\ .\end{equation}
  
As an illustration of her celebrated method of spread, Voisin has proven the following:

\begin{theorem}[Voisin \cite{V1}]\label{cv} Let $X_{DV}$ be a Debarre--Voisin hypersurface. Then
  \[ A^i_{hom}(X_{DV})=0\ \ \ \forall i\not=11\ .\]
  \end{theorem}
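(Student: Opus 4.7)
The plan is to follow Voisin's method of spread \cite{V1}, leveraging the fact that the only non-Hodge--Tate cohomology of $X_{DV}$ is the K3-type sub-Hodge structure in the middle degree $H^{20}$.

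First, I would verify the Hodge-theoretic input via the weak Lefschetz theorem for the Plücker embedding $X_{DV}\subset\Gr(3,10)$. Since $\Gr(3,10)$ has only algebraic even cohomology and vanishing odd cohomology, restriction gives an isomorphism $H^k(\Gr(3,10),\QQ)\xrightarrow{\cong} H^k(X_{DV},\QQ)$ for $k<20$ and an injection for $k=20$; Poincaré duality then propagates the conclusion to $k>20$, so the sole non-trivial transcendental Hodge structure sits in $H^{20}(X_{DV},\QQ)$ and is of K3 type. In particular, under the conjectural Bloch--Beilinson filtration, any graded piece $\mathrm{Gr}^j_F A^i(X_{DV})$ is controlled by $H^{2i-j}_{tr}(X_{DV})$ and can only be non-trivial when $2i-j=20$. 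Among such pairs, the only geometrically natural codimension left after discarding Hodge-class contributions and AJ contributions is $i=11$, $j=2$, which already predicts the shape of the theorem.

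Next, I would carry out the spread argument on the universal family $\pi\colon\XX_{DV}\to B_{DV}$ over a Zariski open in $\PP(\wedge^3 V_{10}^\vee)^\vee$ parametrising smooth Debarre--Voisin hypersurfaces. Given a very general $b\in B_{DV}$ and a class $\gamma\in A^i_{hom}(X_{DV,b})$ with $i\neq 11$, the spread construction (after a generically finite base change) produces a relative cycle $\Gamma\in A^i(\XX_{DV})$ whose fiberwise restrictions are parallel transports of $\gamma$ under local monodromy. The key observation is that the monodromy acts irreducibly on the K3-type local system extracted from $R^{20}\pi_\ast\QQ$ modulo its algebraic part, so the ``transcendental component'' of $[\Gamma]$ must come from a global multiple of the universal K3 sub-variation. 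Degree and Tate-twist bookkeeping in $H^{2i}(\XX_{DV},\QQ)$ then forces this multiple to vanish precisely when $i\neq 11$. A Bloch--Srinivas-style decomposition of the diagonal of $X_{DV,b}$, applied to the algebraic spread that remains, lifts the cohomological vanishing to a rational equivalence and yields $\gamma=0$ in $A^i(X_{DV,b})$, hence $A^i_{hom}(X_{DV})=0$ on the very general fiber. A standard specialisation argument extends the vanishing to all smooth Debarre--Voisin hypersurfaces.

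The main obstacle is the careful bookkeeping near the critical codimension, in particular $i=10$ and $i=12$, where cup product with powers of the hyperplane class $h\in A^1(X_{DV})$ could a priori couple a spread cycle with the K3-type piece of $H^{20}$ via the primitive Lefschetz decomposition. Establishing that such couplings vanish requires a precise compatibility between the fiberwise primitive decomposition and relative correspondences on $\XX_{DV}$, and is the technical heart of \cite{V1}. Voisin handles this by combining Nori-type connectivity arguments with the auxiliary Debarre--Voisin hyperkähler fourfold $F(X_{DV})$: the cylinder correspondence identifies the relevant graded piece with $A^2_{hom}(F(X_{DV}))$ up to a shift, which pinpoints $i=11$ as the unique codimension resisting the spread argument and confirms the asserted vanishing for all other $i$.
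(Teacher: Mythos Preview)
The paper does not prove this statement; it is quoted as a theorem of Voisin, and the only argument supplied is the parenthetical remark immediately after the statement: Voisin \cite[Theorem 2.4]{V1} proves $A^i_{hom}(X_{DV})=0$ for $i>11$ (her general result for very general complete intersections of prescribed Hodge coniveau, here coniveau $9$), and the range $i<11$ is then deduced formally from the range $i>11$ by a Bloch--Srinivas decomposition of the diagonal, as in \cite[Proof of Theorem 2.1]{35}.

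Your sketch has the right flavour for the $i>11$ half (spread over the parameter space, irreducibility of monodromy on the K3-type sub-VHS), but the last paragraph contains a genuine error. Voisin's argument in \cite{V1} is a statement about very general complete intersections and makes no use of the Debarre--Voisin hyperk\"ahler fourfold or any cylinder correspondence; the claim that ``the cylinder correspondence identifies the relevant graded piece with $A^2_{hom}(F(X_{DV}))$'' is not part of her proof and does not supply the step you assign to it. Moreover, trying to extract the low-codimension vanishing ($i<11$) from the same spread/monodromy bookkeeping is not how the argument runs: the spread method naturally controls $A_i$ for $i$ below the coniveau, i.e.\ $A^i_{hom}$ for $i>11$, and your ``degree and Tate-twist bookkeeping'' for the other side is too vague to constitute a proof. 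The correct and much shorter route for $i<11$, which the paper points to, is to use the already-established vanishing for $i>11$ to produce a decomposition of $\Delta_{X_{DV}}$ supported on products of subvarieties of suitable codimension, and let it act on $A^i_{hom}$ for $i\le 10$; since the only transcendental cohomology of $X_{DV}$ sits in degree $20$, this action is zero.
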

  
  (More precisely, Voisin \cite[Theorem 2.4]{V1} proves $A^i_{hom}(X_{DV})=0$ for $i>11$, but this readily
  % then the Bloch--Srinivas argument \cite{BS}, plus the fact that $X_{DV}$ has no odd-degree cohomology,
   implies that $A^i_{hom}(X_{DV})=0$ for $i<11$ as well, as explained in \cite[Proof of Theorem 2.1]{35}.)
  Plugging in Theorem \ref{cv} into isomorphism (\ref{isochow}), we find that
  \begin{equation}\label{vani}  A^i_{hom}(X_b)=0\ \ \forall i\not\in\{ 4,5,6\}\ \ \ \forall b\in B_0\ .\end{equation}
  We now proceed to extend this vanishing from $B_0$ to all of $B$. To do this, we observe that the vanishing (\ref{vani}) implies, via the Bloch--Srinivas ``decomposition of the diagonal'' method \cite{BS}, that for any $b\in B_0$ there is a decomposition of the diagonal
  \begin{equation}\label{decomp} \Delta_{X_b}= x\times X_b + C_b\times D_b +\Gamma_b\ \ \ \hbox{in}\ A^{8}(X_b\times X_b)\ ,\end{equation}
  where $x\in X_b$, $C_b$ and $D_b$ are a curve resp. a divisor, and $\Gamma_b$ is a cycle supported on $X_b\times W_b$ where $W_b\subset X_b$ is a codimension $2$ closed subvariety.
  Using the Hilbert schemes argument of \cite[Proposition 3.7]{V0}, these data can be spread out over the base $B$. That is, we can find $x\in A^8(\XX)$, $\CC\in A^7(\XX)$, $\DD\in A^1(\XX)$, a codimension $2$ subvariety $\WW\subset \XX$ and a cycle $\Gamma$ supported on $\XX\times_B \WW$ such that
  restricting to a fibre $b\in B_0$ we get
  \[ \Delta_{X_b}= x\vert_b\times X_b + \CC\vert_b\times \DD\vert_b +\Gamma\vert_b\ \ \ \hbox{in}\ A^{8}(X_b\times X_b)\ .\] 
  Applying \cite[Lemma 3.2]{Vo}, we find that this decomposition is actually true for any $b$ in the larger base $B$.
   What's more, given any $b_0\in B$, this construction can be done in such a way that $\CC,\DD,\WW$ are in general position with respect to the fibre $X_{b_0}$. That is, we have obtained a decomposition (\ref{decomp}) for any $b_0\in B$. Letting this decomposition act on Chow groups, it follows that
   the vanishing (\ref{vani}) is true for all $b\in B$. This proves the first part of Theorem \ref{main}.
   
   We now prove the ``moreover'' part of Theorem \ref{main}. 
   %Just as above, we will first establish the ``moreover'' statement for the open $B_0$, then we will extend to the larger base $B$.
   So let us consider a Fano variety $X=X_b$ for $b\in B_0$, which means that we can assume $X$ fits into a diagram (\ref{diag}) with some $22$-dimensional variety $Z$ and a Debarre--Voisin hypersurface $X_{DV}$. The isomorphism (\ref{iso7}) implies that there are isomorphisms
   \begin{equation}\label{456}  \iota_\ast (\phi_X)^\ast\colon\ \ A^i_{hom}(X)\ \xrightarrow{\cong}\ A^{i+7}_{hom}(Z)\ \ \ \ (i\in\{4,5,6\})\ .\end{equation}
 The Picard number of $X$ being $1$, any ample divisor $h\in A^1(X)$ comes from a divisor $h_G\in A^1(\Gr(2,10))$. Let $h_Z:=\phi^\ast(h_G)\in A^1(Z)$. 
 In view of the isomorphisms (\ref{456}), to prove the ``moreover'' statement it suffices to prove there are isomorphisms
 \begin{equation}\label{hZ} \begin{split} &\cdot h_Z\colon\ \  A^{11}_{hom}(Z)\ \xrightarrow{\cong}\ A^{12}_{hom}(Z)\ ,\\
               &\cdot h_Z\colon\ \  A^{12}_{hom}(Z)\ \xrightarrow{\cong}\ A^{13}_{hom}(Z)\ .\\  
               \end{split}\end{equation}
Since $p\colon Z\to X_{DV}$ is a $\PP^2$-bundle, the divisor $h_Z\in A^1(Z)$ can be written
  \[ h_Z= p^\ast(c)+\lambda\, \xi\ \ \ \hbox{in}\ A^1(Z)      \ ,\]
  where $\lambda\in\QQ$ and $\xi$ is a relatively ample class for the fibration $p$. We claim that $\lambda$ must be non-zero. (Indeed, suppose $\lambda$ were zero. Then the intersection of $(h_Z)^2$ with a fibre $F$ of $p$ would be zero. But the image $\phi(F)\subset \Gr(2,10)$ is a $2$-dimensional subvariety and so $(h_G)^2\cdot \phi(F)\not=0$; contradiction.)       
  
 The fact that $p$ is a $\PP^2$-bundle, plus the fact that $A^j_{hom}(X_{DV})=0$ for $j\not=11$, gives us isomorphisms
 \[ \begin{split}  &A^{11}_{hom}(Z)\cong p^\ast A^{11}_{hom}(X_{DV})\ ,\\ &A^{12}_{hom}(Z)\cong p^\ast A^{11}_{hom}(X_{DV})\cdot\xi\ ,\\   &A^{13}_{hom}(Z)\cong p^\ast A^{11}_{hom}(X_{DV})\cdot \xi^2\    .\\
 \end{split}\]
 Thus, we see that $\cdot p^\ast(c)$ acts as zero on $A^{11}_{hom}(Z)$ and on $A^{12}_{hom}(Z)$  (indeed, this map factors over $A^{12}_{hom}(X_{DV})=0$).
 It follows that intersecting with $h_Z$ is the same as intersecting with $\lambda\, \xi$ on $A^{j}_{hom}(Z)$, and this induces the desired isomorphisms (\ref{hZ}). We have now proven the ``moreover'' statement for $X_b$ with $b\in B_0$.
 % To extend to all of $B$, we observe that the above argument actually gives isomorphisms of motives, for any $X=X_b$ with $b\in B_0$,
%  \[  \begin{split} &\Gamma_h\colon\ \ \ttt(X_{DV})(-7)\ \xrightarrow{\cong}\ \ttt(X_{DV})(-6)\ ,\\
%                   &\Gamma_h\colon\ \  \ttt(X_{DV})(-6)\ \xrightarrow{\cong}\ \ttt(X_{DV})(-5)\ \ \ \hbox{in}\ \MM_{\rm rat}\ .\\
%                   \end{split}\]
%  Here $\Gamma_h\in A^9(X\times X)$ is the correspondence acting as intersection with $h$, and the $\ttt(X_{DV})(j)$ are considered as submotives of $h(X)$ via the isomorphism of Theorem \ref{main0}.
%  Invoking the spread result \cite[Lemma 3.2]{Vo}, these isomorphisms can be extended from $B_0$ to $B$. Theorem \ref{main} is proven.
                 \end{proof}

\section{Second result}

In this section we prove Theorem \ref{main2} stated in the introduction.
%\begin{theorem}\label{main2}
%Let $X$ be a Fano variety of type S6. Let $R^\ast(X)\subset A^\ast(X)$ denote the $\QQ$-subalgebra
%   \[ R^\ast(X):=\langle A^1(X), A^2(X), c_j(X), \ima\bigl( A^j(\Gr(2,10))\to A^j(X)\bigr)\rangle\ \ \ \subset A^\ast(X)\ .\]
%   The cycle class map induces an injection
%   \[ R^\ast(X)\ \hookrightarrow\ H^\ast(X,\QQ)\ .\]
%   \end{theorem}
In order to prove Theorem \ref{main2}, we first establish a ``Franchetta property'' type of statement (for more on the generalized Franchetta conjecture, cf. \cite{OG}, \cite{PSY}, \cite{FLV}):

\begin{theorem}\label{gfc} Let $\XX\to B$ denote the universal family of Fano varieties of type S6 (as above).
Let $\Psi\in A^{j}(\XX)$ be such that
  \[ \Psi\vert_{X_b}=0\ \ \hbox{in}\ H^{2j}(X_b,\QQ)\ \ \ \forall b\in B\ .\]
  Then
   \[ \Psi\vert_{X_b}=0\ \ \hbox{in}\ A^{j}(X_b)\ \ \ \forall b\in B\ .\]
\end{theorem}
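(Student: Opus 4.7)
The plan is to exploit the projective bundle structure underlying the universal family. Set $P := H^0(\Gr(2,10), \QQQ^\ast(1))$ and introduce the incidence variety
\[ \wt{\XX} := \bigl\{(x, [s]) \in \Gr(2,10) \times \PP P \,\big|\, s(x) = 0\bigr\}\ . \]
Since the evaluation morphism $P \otimes \OO_{\Gr(2,10)} \to \QQQ^\ast(1)$ is surjective, the first projection $\pi\colon \wt{\XX} \to \Gr(2,10)$ is a Zariski-locally-trivial projective bundle of relative dimension $\dim P - 9$, and $\XX$ sits inside $\wt{\XX}$ as the open complement of the discriminant locus; in particular the restriction $A^\ast(\wt{\XX}) \twoheadrightarrow A^\ast(\XX)$ is surjective, so any $\Psi \in A^j(\XX)$ admits a lift $\wt{\Psi} \in A^j(\wt{\XX})$. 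The projective bundle formula allows one to write
\[ \wt{\Psi} = \sum_{i\geq 0} \pi^\ast(\alpha_i) \cdot \xi^i,\qquad \alpha_i \in A^{j-i}(\Gr(2,10))\ , \]
where $\xi$ is the relative hyperplane class of $\pi$, which coincides with the pullback of $\OO_{\PP P}(1)$ via the second projection $p_2\colon \wt{\XX} \to \PP P$. Since $X_b$ is the fiber $p_2^{-1}(b)$, the class $\xi$ restricts trivially to $X_b$, and hence
\[ \Psi|_{X_b} = \alpha_0|_{X_b}\quad\hbox{in}\ A^j(X_b)\ , \]
where the right-hand side denotes restriction along $X_b \hookrightarrow \Gr(2,10)$.

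It therefore suffices to prove the following: for $\alpha_0 \in A^j(\Gr(2,10))$ satisfying $\alpha_0|_{X_b} = 0$ in $H^{2j}(X_b,\QQ)$ for all $b \in B$, one has $\alpha_0|_{X_b} = 0$ in $A^j(X_b)$ for all $b$. Two ranges handle themselves. If $j \notin \{4,5,6\}$, Theorem \ref{main} gives $A^j_{hom}(X_b)=0$, so the cohomological vanishing is automatically a Chow-theoretic one. If $j = 4$, the weak Lefschetz theorem for the complete intersection $X_b \subset \Gr(2,10)$ (of codimension $8$ in a $16$-dimensional ambient) shows that $H^8(\Gr(2,10),\QQ) \to H^8(X_b,\QQ)$ is injective. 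Combined with the isomorphism $A^4(\Gr(2,10)) \cong H^8(\Gr(2,10),\QQ)$ (the Grassmannian has trivial Chow groups), this forces $\alpha_0 = 0$ already in $A^4(\Gr(2,10))$, whence $\alpha_0|_{X_b} = 0$ in $A^4(X_b)$.

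The main difficulty lies in the remaining range $j \in \{5,6\}$, where the restriction $H^{2j}(\Gr(2,10),\QQ) \to H^{2j}(X_b,\QQ)$ need not be injective and $A^j_{hom}(X_b)$ is itself nontrivial. Here the plan is to leverage the motivic isomorphism of Theorem \ref{main0}: for generic $b \in B_0$, the group $A^j_{hom}(X_b)$ is identified, via correspondences built from diagram (\ref{diag}), with $A^{11}_{hom}(X_{DV})$ of the associated Debarre--Voisin hypersurface. Under this identification, the class $\alpha_0|_{X_b}$, being pulled back from $\Gr(2,10)$, corresponds to a class in $A^{11}_{hom}(X_{DV})$ of tautological origin, namely obtained by restriction from the ambient Grassmannian $\Gr(3,10)$. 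A Franchetta-type statement for the universal family of Debarre--Voisin hypersurfaces (itself a hyperplane section of $\Gr(3,10)$, so that the same projective bundle reduction applies), combined with Voisin's spreading argument \cite{V1} underlying Theorem \ref{cv}, then shows that this class vanishes in $A^{11}(X_{DV})$. Translating back yields the desired vanishing for generic $b$, and a Hilbert-scheme spread argument à la \cite{Vo}, as already exploited in the last step of the proof of Theorem \ref{main}, propagates it to all of $B$.
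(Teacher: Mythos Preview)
Your argument is essentially correct, but it takes a more circuitous route than the paper's. Your opening move --- the projective bundle reduction showing that any universal class restricts on $X_b$ to $\alpha_0\vert_{X_b}$ for some $\alpha_0\in A^j(\Gr(2,10))$ --- is a valid and pleasant observation. However, it does not save you from the hard step: in degrees $5$ and $6$ you still need to transport the problem to the Debarre--Voisin side via the correspondences of Theorem~\ref{main0} and invoke a Franchetta property for $\XX_{DV}$, just as the paper does. The paper bypasses the projective bundle step and the case analysis entirely: it observes that the motivic isomorphism of Theorem~\ref{main0} is induced by a \emph{relative} correspondence $\Gamma\in A^\ast(\XX\times_{B_0}\XX_{DV})\oplus A^\ast(\XX)$, so that $\Gamma$ sends universal classes on $\XX$ to universal classes on $\XX_{DV}$ (plus a trivial part), and the fibrewise injectivity of $\Gamma$ reduces the statement uniformly, for all $j$ at once, to the Franchetta property for $\XX_{DV}$ established in \cite[Theorem~3.2]{35}; the spread lemma then extends from $B_0$ to $B$.

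One small imprecision in your sketch: in the $j\in\{5,6\}$ case you assert that $\alpha_0\vert_{X_b}$ corresponds to a class ``obtained by restriction from the ambient Grassmannian $\Gr(3,10)$''. That is stronger than what you need and not obviously true; the correspondence $\iota_\ast(\phi_X)^\ast$ followed by the $\PP^2$-bundle projection does not manifestly land in $\ima\bigl(A^\ast(\Gr(3,10))\to A^\ast(X_{DV})\bigr)$. What you actually need, and what the paper uses, is only that the resulting class on $(X_{DV})_b$ is the restriction of a class on the total space $\XX_{DV}$ --- which follows immediately from the fact that the correspondence is defined relatively over $B_0$. With that correction your argument goes through.
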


\begin{proof} Invoking the spread lemma \cite[Lemma 3.2]{Vo}, it will suffice to prove the theorem over the Zariski open $B_0\subset B$ of Theorem \ref{main0}.
The construction of diagram (\ref{diag}) being geometric in nature, this diagram also exists as a diagram of $B_0$-schemes. Writing $\XX\to B_0$ for the universal family of varieties of type S6 (as before), and $\XX_{DV}\to B_0$ for the universal Debarre--Voisin hypersurface, this means that there exists a relative correspondence $\Gamma\in A^\ast(\XX\times_{B_0} \XX_{DV})\oplus A^\ast(\XX)$ inducing the fibrewise injections (given by Theorem \ref{main0})
  \[ (\Gamma\vert_b)_\ast  \colon\ \ A^j(X_b)\ \hookrightarrow\ A^{11}((X_{DV})_b)\oplus \bigoplus \QQ\ .\]
 Thus, Theorem \ref{gfc} is implied by the Franchetta property for $\XX_{DV}$, which is \cite[Theorem 3.2]{35}.
    \end{proof}

It remains to prove Theorem \ref{main2}:

\begin{proof}(of Theorem \ref{main2}) Clearly, the Chern classes $c_j(X):=c_j(T_X)$ are universally defined: for any $b\in B$, we have
  \[ c_{j}(T_{X_b})= c_{j}(T_{\XX/B})\vert_{X_b}\ .\]
  Also, the image
  \[ \ima \bigl( A^{j}(\operatorname{Gr}(2,{10}))_{}\to A^{j}(X_b)\bigr)  \]
  consists of universally defined cycles (for a given $a\in A^{j}(\operatorname{Gr}(2,{10}))$, the relative cycle
  \[   (a\times B)\vert_\XX\ \ \in\ A^{j}(\XX) \]
  does the job).
 
 Since $A^1(X_b)$ is generated by a hyperplane section, clearly $A^1(X_b)$ is universally defined.
  Similarly, the fact that $A^2_{hom}(X_b)=0$, combined with weak Lefschetz in cohomology, implies that
  \[ A^2(X_b)=\ima\bigl(  A^{2}(\operatorname{Gr}(2,{10}))_{}\to A^{2}(X_b)\bigr)  \ ,\]
  and so $ A^2(X_b)$ also consists of universally defined cycles. 
  %In particular, all intersections
 % \[   A^j(X_\sigma)\cdot A^{11-j}(X_\sigma)\ \ \subset\ A^{11}(X_\sigma)\ ,\ \ \ 1<j<10 \]
 % consist of universally defined cycles.
%  
%  It remains to make sense of intersections
%   \[ A^{j}(X_b)\cdot A^1(X_b)\ \ \subset\ A^{j+1}(X_b)\ .\]  
%  To this end, we note that $A^1(X_b)$ is $1$-dimensional, generated by the restriction $g$ of the Pl\"ucker line bundle. 
%  Let $\iota\colon X_b\to \operatorname{Gr}(2,{10})$ denote the inclusion. The normal bundle formula implies that
%  \[  a\cdot g=  \iota^\ast \iota_\ast (a)\ \ \ \hbox{in}\ A^{j+1}(X_b)\ \ \ \forall\ a\in\ A^{j}(X_b)\ .\]
%  It follows that
%  \[ A^{j}(X_b)\cdot A^1(X_b)\ \ \subset\ \ima   \bigl( A^{j+1}(\operatorname{Gr}(2,{10}))_{}\xrightarrow{\iota^\ast} A^{j+1}(X_b)\bigr)  \]
%  also consists of universally defined cycles.
%  

Intersections of universally defined cycles are universally defined, since $A^\ast(\XX)\to A^\ast(X_b)$ is a ring homomorphism.
In conclusion, we have shown that $R^{\ast}(X_b)$ consists of universally defined cycles, and so Theorem \ref{main2} is a corollary of Theorem \ref{gfc}.
    \end{proof}

%\begin{remark} There are more cycle classes that can be put in the subgroup $R^{\ast}(X)$ of theorem \ref{main2}. For instance, let $Z_b$ be the 
%hyperk\"ahler fourfold associated to $X=X_b$, and assume $Y_\sigma$ is smooth. Then (as we have seen above) the class 
%  \[ (p_\sigma)_\ast(q_\sigma)^\ast c_4(T_{Y_\sigma})\ \ \in\ A^{11}(X) \]
%  is universally defined, hence it can be added to the subgroup $R^{11}(X)$ of theorem \ref{main2}. 
% \end{remark} 
%  

\begin{remark}\label{difficult} Theorem \ref{main2} is an indication that maybe varieties $X$ of type S6 have a {\em multiplicative Chow--K\"unneth decomposition\/}, in the sense of \cite[Chapter 8]{SV}. Unfortunately, establishing this seems difficult; one would need something like theorem \ref{gfc} for
  \[ A^{16}(\XX\times_B \XX\times_B \XX)\ .\]
\end{remark}

Presumably, one can also add $A^3(X)$ to the subring $R^\ast(X)$ of Theorem \ref{main2}. (Indeed, $A^3_{hom}(X)=0$, so {\em provided\/} $X$ has a multiplicative Chow--K\"unneth decomposition, one would have $A^3(X)=A^3_{(0)}(X)$ where $A^\ast_{(\ast)}(X)$ indicates the bigrading induced by the multiplicative Chow--K\"unneth decomposition.) While I cannot prove this, I can prove at least a weaker result:

\begin{proposition}\label{23} Let $X$ be a general Fano variety of type S6. Then
  \[ A^2(X)\cdot A^3(X)\ \subset\ A^5(X) \]
  injects into $H^{10}(X,\QQ)$ under the cycle class map.
  \end{proposition}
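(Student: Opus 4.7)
The plan is to reduce Proposition \ref{23} to the Franchetta-type statement of Theorem \ref{gfc}, by showing that for $X$ general the subgroup $A^2(X)\cdot A^3(X)\subset A^5(X)$ consists of cycles pulled back from the ambient Grassmannian $G:=\Gr(2,10)$, and hence of universally defined classes.

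The first step is to promote the equality $A^2(X)=\iota_X^*A^2(G)$ (used in the proof of Theorem \ref{main2}) to codimension three: for $X$ general, $A^3(X)=\iota_X^*A^3(G)$, where $\iota_X\colon X\hookrightarrow G$ is the inclusion. The argument combines three ingredients: (i) Theorem \ref{main} gives $A^3_{hom}(X)=0$, so the cycle class map injects $A^3(X)$ into $H^6(X,\QQ)$; (ii) for $X$ general, the image of $A^3(X)$ in $H^6(X,\QQ)$ is contained in $\iota_X^*H^6(G,\QQ)$. Indeed, by Theorem \ref{main0} the non-Grassmannian part of $H^6(X)$ is a twist of a K3-type sub-Hodge structure coming from $H^\ast_{tr}(X_{DV})$, which for generic $X_{DV}$ carries no nonzero Hodge class, by a Noether--Lefschetz-type statement applied to the variation of Hodge structure on the K3-piece; (iii) $A^3(G)\twoheadrightarrow H^6(G,\QQ)$, since Grassmannians have trivial Chow groups. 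Given $b\in A^3(X)$, (i) and (ii) yield $[b]=\iota_X^*\alpha$ for some $\alpha\in H^6(G,\QQ)$, (iii) lifts $\alpha$ to $\tilde b\in A^3(G)$, and (i) then forces $b=\iota_X^*\tilde b$. Combined with the codimension-two identity, this gives
\[ A^2(X)\cdot A^3(X)\ \subset\ \iota_X^*\bigl(A^5(G)\bigr)\ . \]

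Any $\Psi\in A^2(X)\cdot A^3(X)$ is thus of the form $\iota_X^*(\tilde c)$ for some $\tilde c\in A^5(G)$, and extends to the universally defined relative class $\widetilde\Psi:=(\tilde c\times B)\vert_\XX\in A^5(\XX)$, whose restriction to each fibre $X_b$ equals $\iota_{X_b}^*(\tilde c)$. The main technical point is that Theorem \ref{gfc} requires homological triviality on \emph{every} fibre, whereas we start only with $[\Psi]=0$ on the single fibre $X=X_{b_0}$. To propagate this, note that the assignment $b\mapsto[\iota_{X_b}^*(\tilde c)]\in H^{10}(X_b,\QQ)$ is a monodromy-invariant section of the local system $R^{10}\pi_\ast\QQ$ on $B$ (where $\pi\colon\XX\to B$), since it arises from restriction to fibres of the global cohomology class $[\widetilde\Psi]\in H^{10}(\XX,\QQ)$. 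Connectedness of $B$ (a Zariski open in a projective space) then forces this flat section to vanish identically once it vanishes at $b_0$. Theorem \ref{gfc} therefore yields $\widetilde\Psi\vert_{X_b}=0$ in $A^5(X_b)$ for every $b$, in particular $\Psi=0$, proving the desired injectivity.
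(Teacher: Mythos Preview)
Your approach is genuinely different from the paper's, but step (ii) has a real gap and, even once patched, delivers a weaker statement than the one claimed.

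The problem with (ii): Theorem \ref{main0} does \emph{not} say that the Lefschetz summands $\bigoplus\one(\ast)$ in $h(X)$ are realized by $\iota_X^\ast A^\ast(G)$. In the proof of Theorem \ref{main0} these trivial summands arise by cancelling the algebraic parts of $\bigoplus h(X_{DV})(i)$ against $\bigoplus h(G)(j)$ inside $h(Z)$; nothing forces the resulting classes on $X$ to be restrictions from $G=\Gr(2,10)$. So identifying the ``non-Grassmannian part'' of $H^6(X)$ with the twisted $H^{20}_{tr}(X_{DV})$ is not something the theorem gives you. Even granting a Noether--Lefschetz statement for the K3-type piece (and the unverified injectivity of $\iota_X^\ast\colon H^6(G)\to H^6(X)$), you would only obtain $A^3(X_b)=\iota_{X_b}^\ast A^3(G)$ for $b$ outside a countable union of proper subvarieties. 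For $b$ in the Noether--Lefschetz locus, which meets $B_0$, the group $A^3(X_b)$ is strictly larger and those extra classes are \emph{not} universally defined; your reduction to Theorem \ref{gfc} then breaks down. So at best you prove the proposition for \emph{very general} $X$, whereas the statement (and the paper's proof) is for every $b\in B_0$.

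The paper avoids all of this by never trying to describe $A^3(X)$. It pushes the product through $\iota_\ast(\phi_X)^\ast$ into $A^{12}(Z)$, uses the projection formula and $\iota^\ast\colon A^2(Z)\cong A^2(Z_X)$ to land in $A^2(Z)\cdot A^{10}(Z)$, and then decomposes via the $\PP^2$-bundle $p\colon Z\to X_{DV}$. The conclusion follows from $A^{10}_{hom}(X_{DV})=A^{12}_{hom}(X_{DV})=0$ together with the injectivity of $A^1(X_{DV})\cdot A^{10}(X_{DV})+A^2(X_{DV})\cdot A^9(X_{DV})\hookrightarrow H^{22}(X_{DV})$ from \cite{35}. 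This works uniformly for all $b\in B_0$, with no Noether--Lefschetz hypothesis.
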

  
  \begin{proof} Assume $X=X_b$ with $b\in B_0$, so that $X$ is related to a Debarre--Voisin hypersurface $X_{DV}$ as in diagram (\ref{diag}). 
  We want to prove that
    \[ \Bigl( A^2(X)\cdot A^3(X)\Bigr)\cap A^5_{hom}(X)=0\ .\]
  Since $\phi_X\colon Z_X\to X$ is a $\PP^7$-bundle, it will suffice to prove that
    \[ \Bigl((\phi_X)^\ast A^2(X)\cdot (\phi_X)^\ast A^3(X)\Bigr)\cap A^5_{hom}(Z_X)=0\ .\]
   Restriction induces an isomorphism $\iota^\ast\colon A^2(Z)\cong A^2(Z_X)$. Moreover, we know (isomorphism (\ref{456})) that
   \[ \iota_\ast (\phi_X)^\ast\colon\ \ A^5_{hom}(X)\ \to\ A^{12}_{hom}(Z)\ \]
   is injective.
   Thus, it suffices to prove that
   \[ \Bigl( A^2(Z)\cdot A^{10}(Z)\Bigr)\cap A^{12}_{hom}(Z)=0\ .\]
   But this follows from the $\PP^2$-bundle structure of $p\colon Z\to X_{DV}$: indeed, any $a\in A^2(Z)$ and $b\in A^{10}(Z)$ can be written
   \[ \begin{split} & a=p^\ast(a_2) + p^\ast(a_1)\cdot\xi + p^\ast(a_0)\cdot \xi^2\ \ \ \hbox{in}\ A^2(Z)\ ,\\
                      & b= p^\ast(b_{10}) + p^\ast(b_9)\cdot\xi + p^\ast(b_8)\cdot \xi^2\ \ \ \hbox{in}\ A^{10}(Z)\ ,\\
                      \end{split}\]
                      where $\xi$ is a relatively ample class, and $a_j,b_j\in A^j(X_{DV})$. The intersection $a\cdot b$ can be written
             \[ a\cdot b= p^\ast(a_2\cdot b_{10})+p^\ast(a_1\cdot b_{10}+a_2\cdot b_9)\cdot\xi + p^\ast(a_2\cdot b_8+a_1\cdot b_9 + a_0\cdot b_{10})\cdot\xi^2\ \ \ \hbox{in}\ A^{12}(Z)\ .\]
  As the intersection $a\cdot b$ is assumed to be homologically trivial, this means that
  \[ a_2\cdot b_{10}\ ,\ \ \     a_1\cdot b_{10}+a_2\cdot b_9\ , \ \ \ a_2\cdot b_8+a_1\cdot b_9 + a_0\cdot b_{10} \]
  are homologically trivial on $X_{DV}$. But $A^{12}_{hom}(X_{DV})=A^{10}_{hom}(X_{DV})=0$, and so 
  \[ a_2\cdot b_{10} =a_2\cdot b_8+a_1\cdot b_9 + a_0\cdot b_{10} =0\ \ \ \hbox{in}\ A^\ast(X_{DV})\ .\]
  As for the remaining term, it is proven in \cite[Theorem 3.1]{35} that
  \[ A^1(X_{DV})\cdot A^{10}(X_{DV})+ A^2(X_{DV})\cdot A^9(X_{DV})\ \ \subset\ A^{11}(X_{DV}) \]
  injects into cohomology, and so also
  \[  a_1\cdot b_{10}+a_2\cdot b_9=0\ \ \ \hbox{in}\ A^{11}(X_{DV})\ .\]
  It follows that $a\cdot b=0$, and the proposition is proven.
      \end{proof}

\vskip1cm
\begin{nonumberingt} Thanks to the Lego Builders Crew of Schiltigheim for their boundless and inspiring creativity. Thanks to the referee for many constructive remarks.
\end{nonumberingt}

\vskip1cm

\end{document}